\newtheorem{thm}{Theorem}[section]
\newtheorem{cor}[thm]{Corollary}
\newtheorem{lem}[thm]{Lemma}
\newtheorem{prop-def}[thm]{Proposition-Definition}
\theoremstyle{definition}
\newtheorem{defi}[thm]{Definition}
\theoremstyle{remark}
\newtheorem{exm}[thm]{\bf Example}
\numberwithin{equation}{section}
\numberwithin{figure}{section}
\DeclareMathOperator{\sgn}{sgn}
\DeclareMathOperator{\Ori}{Ori}
\def\C{\mathbb{C}}
\def \ep{\epsilon}
\def \i{\mathbf{i}}
\def \lamax{\lambda_{\max}}
\def \lamin{\lambda_{\min}}
\def\R{\mathbb{R}}
\def\v{{\mathbf v}}
\begin{document}

\title[Largest H-eigenvalue of mixed graph]{On the largest eigenvalue of a mixed graph with partial orientation}

\author[B.-J. Yuan]{Bo-Jun Yuan}
\address{School of Mathematical Sciences, Anhui University, Hefei 230601, P. R. China}
\email{ybjmath@163.com}

\author[Y. Wang]{Yi Wang}
\address{School of Mathematical Sciences, Anhui University, Hefei 230601, P. R. China}
\email{wangy@ahu.edu.cn}

\author[Y.-Z. Fan]{Yi-Zheng Fan$^*$}
\address{School of Mathematical Sciences, Anhui University, Hefei 230601, P. R. China}
\email{fanyz@ahu.edu.cn}
\thanks{$^*$The corresponding author.
This work was supported by National Natural Science Foundation of China (Grant No. 11871073, 11771016).}

\date{\today}

\subjclass[2010]{05C50}

\keywords{Mixed graph; Hermitian adjacency matrix; largest eigenvalue; interlacing family; matching polynomial; partial orientation}

\date{}
\maketitle

\begin{abstract}
Let $G$ be a connected graph and let $T$ be a spanning tree of $G$.
A partial orientation $\sigma$ of $G$ respect to $T$ is an orientation of the edges of $G$ except those edges of $T$,
 the resulting graph associated with which is denoted by $G_T^\sigma$.
In this paper we prove that there exists a partial orientation $\sigma$ of $G$ respect to $T$
such that the largest eigenvalue of the Hermitian adjacency matrix of $G_T^\sigma$ is at most the largest absolute value of the  roots of the matching polynomial of $G$.
\end{abstract}

\section{Introduction}
Let $G=(V,E)$ be a simple graph.
Based on $G$, a {\it mixed graph} $D_G$ (or $D$ for short) is obtained from $G$ by orienting some of its edges,
 where $G$ is called the {\it underlying graph} of $D$, and the oriented edges are the \emph{arcs} of $D$.
Formally, a mixed graph $D$ is comprised of a vertex set $V(D)=V(G)$, a set of undirected edges and a set of arcs.
To avoid confusion, denote an undirected edge by $\{u,v\}$ and an arc by $(u,v)$.

Let $F$ be the set of edges of $G$ to be unoriented.
An \emph{orientation $\sigma$ of $G$ with respect to $F$} is defined as a skew-symmetric map:
$$ \sigma: (V(G) \times V(G)) \backslash F \to \{0, 1, -1\},$$
where $\sigma(u,v)=-\sigma(v,u)$, and $\sigma(u,v) \ne 0$ if and only if $\{u,v\} \in E(G)$.
We note here $F$ is considered as a set of ordered pairs of the vertices of $V(G)$ by replacing each element $\{u,v\}$ by $(u,v)$ and $(v,u)$.
The resulting mixed graph associated with $\sigma$ and $F$ is denoted by $G_F^\sigma$,
  where each edge in $F$ is unoriented (undirected), and each edge $e=\{u,v\}$ outside $F$ is oriented from $u$ to $v$ if $\sigma(u,v)=1$ or  from $v$ to $u$ otherwise.
If $F=\emptyset$, then $G_F^\sigma$, written as $G^\sigma$ in this case, is an \emph{oriented graph} or a mixed graph with \emph{complete orientation}.
If $F=E(G)$, then $G_F^\sigma$ is exactly the graph $G$ without any orientation.
Otherwise, $G_F^\sigma$ is called a mixed graph with \emph{partial orientation}.

The {\it Hermitian adjacency matrix} \cite{LL, mohar1} of $G_F^\sigma$
is defined to be a matrix $H(G_F^\sigma)=(h_{uv})$, where
$$h_{uv}= \left  \{
\begin{array}{ll}
1, & \hbox{~if~}  \{u,v\} \in F,\\
\i \sigma(u,v), & \hbox{~if~}  \{u,v\} \in E(G) \backslash F,\\
0, & \hbox{~otherwise~},
\end{array}
\right.
$$
where $\i=\sqrt{-1}$ is the imaginary unit.
Since $H(G_F^\sigma)$ is Hermitian, its eigenvalues are all real.
Denote by $\lamax(G_F^\sigma), \lamin(G_F^\sigma)$ the largest and the smallest eigenvalues of $H(G_F^\sigma)$ respectively.
The \emph{spectral radius} of $G_F^\sigma$, denoted by $\rho(G_F^\sigma)$, is defined to be the largest modulus of eigenvalues of $H(G_F^\sigma)$, which is equal to
$\max\{\lamax(G_F^\sigma), |\lamin(G_F^\sigma)|\}$.
In this paper the \emph{spectrum} and the \emph{eigenvalues} of a graph always refer to those of its Hermitian adjacency matrix.

Guo and Mohar \cite{mohar1} proved that $\rho(G_F^\sigma)\leq \rho(G)$ for any $F$ and $\sigma$.
Mohar \cite{mohar2} characterized the case when the equality is attained in the above inequality.
An interesting question is characterizing orientations for which the spectral radius is as small as possible.
 Chung and Graham \cite{Chung}, Griffiths \cite{Griffiths} considered quasi-randomness in digraphs which is related to the question.
Mohar \cite{mohar2} asked what is the minimum spectral radius taken over all orientations of a given graph.
Greaves, Mohar, and O \cite{Greaves} answered the question as follows:

\begin{thm}\cite{Greaves}\label{larg}
Let $G$ be a graph and let $\mu_G(x)$ be the matching polynomial of $G$.
Then there exists a complete orientation $\sigma$ of $G$ such that
$\lamax(G^\sigma) \le \rho(\mu_G)$, where $\rho(\mu_G)$ is the largest absolute value of the  roots of $\mu_G(x)$.
\end{thm}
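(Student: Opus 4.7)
My plan is to follow the interlacing polynomials framework of Marcus, Spielman, and Srivastava, specialized to a random orientation model. Place an independent uniform sign $\sigma(e)\in\{+1,-1\}$ on every edge $e\in E(G)$, inducing a uniform random complete orientation $G^\sigma$, and write $p_\sigma(x)=\det(xI-H(G^\sigma))$. Fix an enumeration $e_1,\dots,e_m$ of $E(G)$ and organize the $2^m$ polynomials into a binary tree whose internal nodes are partial orientations $\tau$ of the edges $e_1,\dots,e_k$; at each such node attach the conditional expectation
\[
q_\tau(x)=\mathbb{E}_\sigma\bigl[p_\sigma(x)\,\bigm|\,\sigma(e_j)=\tau(e_j),\ j\le k\bigr].
\]
Provided $\{q_\tau\}$ is an interlacing family in the sense of MSS, their theorem yields a leaf $\sigma$ with $\lamax(G^\sigma)\le\lamax(q_\emptyset)$, so the argument reduces to (a) identifying the root polynomial $q_\emptyset$ and (b) verifying the interlacing property.

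For (a), I expand $p_\sigma(x)=\sum_{\pi\in S_n}\mathrm{sgn}(\pi)\prod_i(xI-H(G^\sigma))_{i,\pi(i)}$ and decompose $\pi$ into cycles. Fixed points contribute $x$, while a cycle of length $\ell\ge 2$ on vertices $v_1,\dots,v_\ell$ contributes $(-1)\prod_{r}H_{v_r,v_{r+1}}$ (indices mod $\ell$). A transposition on an edge $\{u,v\}$ gives $H_{uv}H_{vu}=(\i\sigma(u,v))(-\i\sigma(u,v))=1$, hence a deterministic contribution of $-1$. A cycle of length $\ell\ge 3$ yields $\i^\ell\prod_r\sigma(v_r,v_{r+1})$; since the $\ell$ edges are distinct the product is over independent Rademachers and vanishes in expectation. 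Only involutions survive, and these are in bijection with the matchings $M$ of $G$:
\[
q_\emptyset(x)=\mathbb{E}_\sigma[p_\sigma(x)]=\sum_{M}(-1)^{|M|}x^{n-2|M|}=\mu_G(x).
\]

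The more delicate step is (b). At a node $\tau$, flipping the sign of $e_{k+1}=\{u,v\}$ alters $H(G^\sigma)$ only in positions $(u,v)$ and $(v,u)$, so for every fixed orientation of the remaining edges the interpolation $(1-s)H_++sH_-$ is Hermitian for $s\in[0,1]$ and hence has a real-rooted characteristic polynomial; averaging over the yet-uncommitted signs should preserve this, giving real-rootedness of each convex combination $(1-s)q_{\tau,+}+s q_{\tau,-}$. The standard common-interlacer criterion then shows $q_{\tau,+}$ and $q_{\tau,-}$ share a common interlacer, so $\{q_\tau\}$ is an interlacing family. This is the main obstacle, because propagating real-rootedness through the averaging step requires a careful appeal to the stability theory of Hermitian pencils rather than a single matrix-determinant-lemma computation. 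Granting it, MSS produces an orientation $\sigma$ with $\lamax(G^\sigma)\le\lamax(\mu_G)$, and by Heilmann--Lieb the polynomial $\mu_G$ has only real roots; since the identity $\mu_G(-x)=(-1)^n\mu_G(x)$ forces these roots to be symmetric about $0$, we conclude $\lamax(\mu_G)=\rho(\mu_G)$ and hence $\lamax(G^\sigma)\le\rho(\mu_G)$, as required.
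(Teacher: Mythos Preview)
The paper does not itself prove this theorem; it is quoted from Greaves--Mohar--O.  However, the paper \emph{does} prove the partial-orientation analogue (Theorem~\ref{main1}) by exactly the method of \cite{Greaves}, so that is the relevant comparison.

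Your outline is correct through part~(a): the permutation expansion and the observation that only involutions survive the expectation are precisely Theorem~\ref{random} here (and its complete-orientation counterpart in \cite{Greaves}).  The problem is part~(b).  You write that for fixed remaining edges the interpolated matrix $(1-s)H_{+}+sH_{-}$ is Hermitian, ``hence has a real-rooted characteristic polynomial; averaging over the yet-uncommitted signs should preserve this, giving real-rootedness of each convex combination $(1-s)q_{\tau,+}+s\,q_{\tau,-}$.''  But $\det\bigl(xI-((1-s)H_{+}+sH_{-})\bigr)$ is \emph{not} the same polynomial as $(1-s)\det(xI-H_{+})+s\det(xI-H_{-})$, so the Hermiticity of the interpolated matrix says nothing about the convex combination of characteristic polynomials that you actually need.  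You notice this yourself (``this is the main obstacle''), but the proposal never supplies the missing mechanism; as written it is a gap, not a proof.

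The device that \cite{Greaves} (and this paper, in the proof of Theorem~\ref{main1}) uses to close this gap is a concrete rank-one decomposition.  For each edge $e_j=\{u_j,v_j\}$ set $a_j=\ep_{u_j}+\i\,\ep_{v_j}$ and $b_j=\ep_{u_j}-\i\,\ep_{v_j}$; then for any orientation $\sigma$ one has
\[
\sum_{j\in S} a_j a_j^{*}+\sum_{j\notin S} b_j b_j^{*}=D-H(G^{\sigma}),
\]
where $S=\{j:s_j=1\}$ and $D$ is the degree matrix.  With this in hand, each partial-sum polynomial $\lambda f_{s_1,\dots,s_k,1}+(1-\lambda)f_{s_1,\dots,s_k,-1}$ becomes, after the shift $y=x-\Delta$, exactly an instance of Lemma~\ref{interlacing1} (the Marcus--Spielman--Srivastava mixed-characteristic-polynomial lemma) with a positive semidefinite shift $\Delta I-D$.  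That lemma is what simultaneously handles the convex combination on edge $k+1$ \emph{and} the averaging over edges $k+2,\dots,m$; there is no separate ``stability of Hermitian pencils'' step.  Your proposal would be complete once you insert this rank-one trick and invoke Lemma~\ref{interlacing1} in place of the heuristic averaging argument.
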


As $G^\sigma$ is an oriented graph, the spectrum of $H(G^\sigma)$ is symmetric about the origin (see \cite{mohar1} or \cite{LL}).

\begin{cor}\cite{Greaves}\label{rho}
Let $G$ be a graph and let $\mu_G(x)$ be the matching polynomial of $G$.
Then there exists a complete orientation $\sigma$ of $G$ such that
$\rho(G^\sigma) \le \rho(\mu_G)$.
\end{cor}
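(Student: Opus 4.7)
The plan is to deduce the corollary almost immediately from Theorem~\ref{larg} by exploiting the symmetry of the spectrum of an oriented graph, which is exactly the fact recorded in the remark that precedes the statement.

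First, I would invoke Theorem~\ref{larg} to obtain a complete orientation $\sigma$ of $G$ such that $\lamax(G^\sigma) \le \rho(\mu_G)$. Since $G^\sigma$ is a fully oriented graph (i.e.\ $F=\emptyset$), the Hermitian adjacency matrix $H(G^\sigma)$ is similar to $-H(G^\sigma)$; indeed, conjugation by the complex conjugation/diagonal sign symmetry used in \cite{mohar1,LL} shows that the spectrum of $H(G^\sigma)$ is symmetric about the origin. Consequently,
\begin{equation*}
\lamin(G^\sigma) = -\lamax(G^\sigma),
\end{equation*}
so that $|\lamin(G^\sigma)| = \lamax(G^\sigma)$.

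Combining these two observations,
\begin{equation*}
\rho(G^\sigma) = \max\bigl\{\lamax(G^\sigma),\,|\lamin(G^\sigma)|\bigr\} = \lamax(G^\sigma) \le \rho(\mu_G),
\end{equation*}
which is precisely the inequality asserted by the corollary. There is no real obstacle here: the only nontrivial ingredient is Theorem~\ref{larg} itself, and the corollary is simply its symmetrized reformulation for the spectral radius. The step that deserves the most care, if any, is the justification of the symmetry of the spectrum of $H(G^\sigma)$ for a completely oriented graph, but this is a standard fact cited from \cite{mohar1,LL} and acknowledged in the sentence separating Theorem~\ref{larg} from Corollary~\ref{rho}.
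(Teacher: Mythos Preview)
Your proof is correct and matches the paper's own justification: the paper does not give a separate proof of Corollary~\ref{rho} but simply notes, in the sentence immediately preceding it, that the spectrum of $H(G^\sigma)$ is symmetric about the origin for an oriented graph, so the corollary follows directly from Theorem~\ref{larg}. Your argument spells out exactly this reasoning.
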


In this paper we will discuss Mohar's question on mixed graphs with partial orientations.
Let $G$ be a connected graph and let $T$ be a spanning tree of $G$.
Write $G_{E(T)}^\sigma$ as $G_T^\sigma$ for short.
We prove that there exists a partial orientation $\sigma$ of $G$ with respect to $T$ such that the largest eigenvalue of $H(G_T^\sigma)$
 does not exceed the largest absolute value of the  roots of the matching polynomial of $G$.
We stress two points as follows.
(1) If $G$ is bipartite, then $H(G_T^\sigma)$ has a symmetric spectrum so that spectral radius of $G_T^\sigma$
 does not exceed the largest absolute value of the  roots of the matching polynomial of $G$.
(2) If $G$ contains even cycles, we can take a spanning tree $T$
such that under any partial orientation $\sigma$, $G_T^\sigma$ is not switching equivalent to an oriented graph or unoriented graph.
So our result is not a conclusion of Theorem \ref{larg} and Corollary \ref{rho}.

We follow the method of interlacing families of polynomials that was developed by
Marcus, Spielman, and Srivastava \cite{Marcus} in their seminal work on
the existence of infinite families of Ramanujan graphs, and are motivated by work of Greaves, Mohar, and O \cite{Greaves}.

Denote by $I$ an identity matrix. 
For a matrix or vector $M$ over $\C$, denote by $M^\top$ the transpose of $M$, $\overline{M}$ the conjugate of $M$, and $M^*:=\overline{M}^\top$.
Denote by $\mathbb{E} \v$ the expectation of a random variable $\v$.
For a positive integer $m$, denote $[m]:=\{1,\ldots,m\}$.

\section{Main results}

\subsection{Matching polynomials}

Let $G$ be a graph of order $n$.
A \emph{matching} in $G$ is a set of pairwise non-adjacent edges.
Let $m_k(G)$ be the number of matchings of $G$ consisting of $k$ edges and let $m_0=1$.
Heilmann and Lieb \cite{Heilmann} defined the {\it matching polynomial} of $G$ as
$$\mu_G(x) := \sum_{k\geq 0}(-1)^k m_k(G) x^{n-2k}.$$
Since $\mu_{G}(x)$ can be written as $x f(x^{2})$ or $f(x^{2})$ for some polynomial $f$, the roots of $\mu_{G}(x)$ are symmetric about the origin.
Denote by $\rho(\mu_G)$ the largest absolute value of the  roots of $\mu_G(x)$.
It is proved that $\mu_G(x)$ has only real roots \cite{Heilmann}.
So $\rho(\mu_G)$ is exactly the largest root of $\mu_G(x)$.

Godsil and Gutman \cite{GG} proved that the expected
characteristic polynomial over uniformly random signings of a graph is its matching polynomial.
Greaves, Mohar, and O \cite{Greaves} showed that the the expected
characteristic polynomial over uniformly random orientations of a graph is also equal to its matching polynomial.

Let $G=(V, E)$ be a connected graph and $T$ be a spanning tree of $G$.
Let $\Ori_T(G)$ be the set of all partial orientations of $G$ with respect to $T$.
 We show that the expected characteristic polynomial of $H(G_T^\sigma)$ over all uniformly random partial orientations $\sigma$ of $G$ with respect to $T$
  is equal to the matching polynomial of $G$.

\begin{thm} \label{random}
Let $G=(V, E)$ be a connected graph and $T$ be a spanning tree of $G$.
Then
$$\mathbb{E}_{\sigma \in \Ori_T(G)} \det (xI-H(G_T^{\sigma}))=\mu_{G}(x).$$
\end{thm}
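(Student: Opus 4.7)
The plan is to expand the determinant via the Leibniz permutation formula
\begin{equation*}
\det(xI - H(G_T^\sigma)) = \sum_{\pi \in S_n} \sgn(\pi) \prod_{i \in V(G)} \bigl(xI - H(G_T^\sigma)\bigr)_{i, \pi(i)},
\end{equation*}
and to identify which permutations survive after averaging over $\sigma$. For each $\pi$, I would factor its contribution according to the disjoint cycle decomposition. A fixed point contributes $x$. A transposition $(i,j)$ contributes $(-h_{ij})(-h_{ji}) = |h_{ij}|^2 = 1$ whenever $\{i,j\} \in E(G)$, independently of whether $\{i,j\} \in E(T)$ and independently of $\sigma$. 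A cycle $(i_1,\ldots,i_\ell)$ with $\ell \geq 3$ contributes $(-1)^\ell \prod_{s=1}^\ell h_{i_s, i_{s+1}}$ (indices mod $\ell$), which is nonzero only when $i_1, \ldots, i_\ell$ traces an $\ell$-cycle in $G$.

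The key step is to show that cycles of length at least $3$ vanish in expectation. Since $T$ is a tree, any cycle of $G$ of length $\ell \geq 3$ must use at least one edge outside $T$. Under a uniformly random $\sigma \in \Ori_T(G)$, the values $\{\sigma(e) : e \in E(G) \setminus E(T)\}$ are independent uniform $\pm 1$ random variables, and $h_{uv} = \i\, \sigma(u,v)$ along each such edge. Consequently, the contribution of a cycle of length $\ell \geq 3$ in $\pi$ has the form $\i^{\,t}\prod_{s=1}^{t}\sigma(e_s)$ times a deterministic product of tree-edge factors, where $e_1,\ldots,e_t$ are the $t \geq 1$ distinct non-tree edges traversed (each appearing to the first power). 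Its expectation is zero. Because disjoint cycles of $\pi$ act on disjoint vertex sets and therefore involve disjoint edge sets, the corresponding random variables are independent and the expectation factors across the cycles of $\pi$; hence any $\pi$ containing a cycle of length $\geq 3$ contributes nothing in expectation.

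Therefore only permutations whose cycle type consists entirely of fixed points and transpositions survive. Such a $\pi$ corresponds bijectively to a matching $M$ of $G$, whose $k = |M|$ edges give the transpositions and whose $n - 2k$ unmatched vertices are the fixed points. Then $\sgn(\pi) = (-1)^k$ and the product of entries is $x^{n-2k}$, so the surviving contribution is $(-1)^k x^{n - 2k}$, which is deterministic in $\sigma$. Summing over all matchings yields
\begin{equation*}
\mathbb{E}_{\sigma \in \Ori_T(G)} \det(xI - H(G_T^\sigma)) \;=\; \sum_{k\geq 0} (-1)^k m_k(G)\, x^{n - 2k} \;=\; \mu_G(x).
\end{equation*}
The main point of care is the factorization of expectation across the cycles of $\pi$; this is routine because different cycles of $\pi$ involve disjoint edges and the random signs $\sigma(e)$ on non-tree edges are mutually independent.
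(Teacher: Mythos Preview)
Your proof is correct and follows essentially the same approach as the paper's: both expand the determinant by the Leibniz formula, use the fact that any cycle of length $\geq 3$ in $G$ must contain a non-tree edge whose random $\pm\i$ entry has mean zero, and conclude that only permutations consisting of fixed points and transpositions (i.e., matchings) survive. If anything, you are slightly more explicit than the paper about why the expectation factors across the disjoint cycles of $\pi$.
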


\begin{proof}
Suppose $G$ has $n$ vertices.
Let $\mathfrak{S}(S)$ denote the set of permutations over a finite set $S$.
 By expanding the determinant as a sum over all permutations in $\mathfrak{S}(V)$, we have
\begin{align*}
\mathbb{E}_{\sigma \in \Ori_T(G)} \det(xI & - H(G_T^{\sigma}))= \mathbb{E}_{\sigma \in \Ori_T(G)}
\sum_{\pi \in \mathfrak{S}(V)} \sgn\pi \prod_{v \in V} (xI-H(G_T^{\sigma}))_{v, \pi(v)} \\
&=\sum_{k=0}^{n} (-1)^{k} x^{n-k} \sum_{S \subseteq V,\atop |S|=k}
\sum_{\pi \in \mathfrak{S}(S)}\sgn\pi \mathbb{E}_{\sigma \in \Ori_T(G)}  \prod_{v \in S} H(G_T^{\sigma})_{v,\pi(v)}.
\end{align*}

The entries of $H(G_T^{\sigma})=(h^\sigma_{uv})$ can be viewed as mutually independent random variables,
  except those constant entries $h_{u v}^{\sigma}=1$ for $\{u,v\} \in E(T)$,
  and pairs of $h_{u v}^{\sigma}$ and $h_{v u}^{\sigma}$ for $\{u,v\} \in E(G) \backslash E(T)$ which are
   inverse to each other (i.e. $h_{u v}^{\sigma} h_{v u}^{\sigma}=1$).
 Note that $\mathbb{E}_{\sigma \in \Ori_T(G)} h_{u v}^{\sigma}=1$ for every $\{u,v\} \in E(T)$,
 and  $\mathbb{E}_{\sigma \in \Ori_T(G)} h_{u v}^{\sigma}=0$ for every $\{u,v\} \in E(G) \backslash E(T)$.

For each $\pi \in \mathfrak{S}(S)$, if there exists a vertex $v \in S$ such that
   $\{v, \pi(v)\} \notin E(G)$, surely $\prod_{v \in S} H(G_T^{\sigma})_{v,\pi(v)}=0$.
So it suffices to consider those $\pi \in \mathfrak{S}(S)$ such that  $\{v, \pi(v)\} \in E(G)$ for all $v \in S$.
Let $\pi=\pi_1 \cdots \pi_t \in \mathfrak{S}(S)$ be a decomposition of $\pi$ into the product of disjoint cycles $\pi_1, \ldots, \pi_t$, where $t \ge 1$.
If $\pi$ contains a cycle say $\pi_1:=(v_{i_1} \ldots v_{i_t})$ of length $t$ at least $3$, then $G$ contains a cycle $C_{\pi_1}$ with edges
$\{v_{i_1},v_{i_2}\},\ldots, \{v_{i_t},v_{i_1}\}$.
As $T$ is a spanning tree of $G$, there exists at least one edge say $e_1:=\{v_{i_1},v_{i_2}\}$ of $C_{\pi_1}$ outside $T$.
So $e_1 $ is oriented by $\sigma$ in $G_T^\sigma$, and $\mathbb{E}_{\sigma \in \Ori_T(G)}h_{v_{i_1}v_{i_2}}^{\sigma}=0$,
which implies that in this situation
$$\mathbb{E}_{\sigma \in \Ori_T(G)}  \prod_{v \in S} H(G_T^{\sigma})_{v,\pi(v)}=0.$$
So it is enough to consider those $\pi$ which is a product of involutions $\pi_1, \ldots, \pi_t$.
Let $\pi_j=(v_{i_{2j-1}}, v_{i_{2j}})$ for $j \in [t]$.
Then the edges $\{v_{i_{2j-1}}, v_{i_{2j}}\}$ for $j \in [t]$ consist of a matching of size $t$ such that all vertices of $S$ are matched;
simply call $\pi$ a \emph{matching on $S$}.
In this case, $|S|$ is even, $t=|S|/2$, and for $j \in [t]$
$$\mathbb{E}_{\sigma \in \Ori_T(G)}h^\sigma_{v_{i_{2j-1}}, v_{i_{2j}}} h^\sigma_{v_{i_{2j}},v_{i_{2j-1}}}= \mathbb{E}_{\sigma \in \Ori_T(G)} 1=1.$$
So we have
$$\sgn\pi \mathbb{E}_{\sigma \in \Ori_T(G)}  \prod_{v \in S} H(G_T^{\sigma})_{v,\pi(v)}=(-1)^{|S|/2}.$$
By the above discussion, we get
$$
\mathbb{E}_{\sigma \in \Ori_T(G)}\det(xI-H(G_T^{\theta}))
= \sum_{k=0, \atop k ~{\rm even}}^n x^{n-k}\sum_{|S|=k} \sum_{{\rm matchings} \atop \pi {\rm on} S}(-1)^{k/2} \\
=\mu_G(x).
$$
\end{proof}

\subsection{Interlacing polynomials}

A univariate polynomial is called \emph{real-rooted} if all of its coefficients and roots are real.

\begin{defi}\cite{Marcus}
A real-rooted polynomial $g(x) = \prod_{j=1}^{n-1}(x - \alpha_j)$ \emph{interlaces} a real-rooted
polynomial $f(x) = \prod_{j=1}^{n}(x - \beta_j)$ if
$$\beta_1 \leq \alpha_1 \leq \beta_2 \leq \alpha_2 \leq \ldots \leq \alpha_{n-1} \leq \beta_n.$$
The polynomials $f_1, \ldots, f_k$ is said to have a {\it common interlacing} if
there is a single polynomial $g$ such that $g$ interlaces each of the $f_i$ for $i \in [k]$.
\end{defi}

\begin{defi}\cite{Marcus}
Let $S_1,\ldots,S_m$ be finite sets,
 and let $f_{s_1,\ldots,s_m}(x)$ be a real-rooted polynomial of degree $n$ with positive leading coefficient
 for every assignment $(s_1,\ldots,s_m) \in S_1 \times \cdots \times S_m$.
For a partial assignment $(s_1,\ldots, s_k) \in S_1 \times \cdots \times S_k$ with $k < m$, define
$$f_{s_1,\ldots, s_k}= \sum_{s_{k+1}\in S_{k+1},\ldots, s_m\in S_m}f_{s_1,\ldots, s_k,s_{k+1},\ldots, s_m}$$
as well as
$$f_\emptyset= \sum_{s_{1}\in S_{1},\ldots, s_m\in S_m}f_{s_1,\ldots, s_m}.$$
The polynomials $\{f_{s_1,\ldots, s_m}\}_{s_1,\ldots, s_m}$ is said to form an {\it interlacing family}
 if for all $k = 0,\ldots, m-1$ and all $(s_1,\ldots, s_k) \in S_1 \times \cdots \times S_k$,
 the polynomials $\{f_{s_1,\ldots, s_k,t}\}_{t\in S_{k+1}}$ have a common interlacing.
\end{defi}

\begin{lem}\label{fell}\cite{Dedieu,Fell,ChSe}
Let $f_{1}, \ldots, f_{k}$ be (univariate) polynomials of the same degree with positive leading coefficients.
Then $f_{1}, \ldots, f_{k}$ have a common interlacing
if and only if $\sum\limits_{i=1}^{k} \lambda_{i}f_{i}$ is real-rooted for all nonnegative $\lambda_{1}, \ldots, \lambda_{k}.$
\end{lem}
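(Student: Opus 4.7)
I would handle the two directions separately.

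For the forward direction, suppose $g(x)=\prod_{j=1}^{n-1}(x-\alpha_j)$ with $\alpha_1\le\cdots\le\alpha_{n-1}$ is a common interlacer. Writing each $f_i(x)=c_i\prod_{j=1}^n(x-\beta_j^{(i)})$ with $c_i>0$ and sorted roots, the interlacing inequalities $\beta_j^{(i)}\le\alpha_j\le\beta_{j+1}^{(i)}$ imply, via a direct count of negative factors in the product, that $\sgn f_i(\alpha_j)=(-1)^{n-j}$, a value independent of $i$. Consequently, for any nonnegative $\lambda_1,\ldots,\lambda_k$ not all zero, $F:=\sum_i\lambda_i f_i$ has positive leading coefficient and satisfies $(-1)^{n-j}F(\alpha_j)\ge 0$. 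The alternating signs at $\alpha_1,\ldots,\alpha_{n-1}$ together with the limits $F(x)\to\pm\infty$ as $x\to\pm\infty$ yield $n$ real roots of $F$ by the intermediate value theorem; the degenerate cases in which some $F(\alpha_j)=0$ are handled by standard bookkeeping.

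For the reverse direction, I proceed by contrapositive. Setting all $\lambda_i$ but one to zero first shows each $f_i$ is real-rooted. Sorting the roots of $f_i$ as $\beta_1^{(i)}\le\cdots\le\beta_n^{(i)}$, one verifies that a common interlacer exists if and only if $\max_i\beta_j^{(i)}\le\min_i\beta_{j+1}^{(i)}$ for every $j$. If this condition fails, one can select indices $p,q\in[k]$ and $j$ with $\beta_j^{(p)}>\beta_{j+1}^{(q)}$, so it suffices to produce a nonnegative combination of $f_p$ and $f_q$ alone that has a non-real root. To this end, I consider the one-parameter family $F_t=(1-t)f_p+tf_q$, $t\in[0,1]$. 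Its discriminant $\Delta(t)$ is a polynomial in $t$, and $F_t$ is real-rooted for all $t\in[0,1]$ if and only if $\Delta(t)\ge 0$ throughout $[0,1]$. Using the continuity and order of the real roots $r_1(t)\le\cdots\le r_n(t)$ together with the inequality $\beta_j^{(p)}>\beta_{j+1}^{(q)}$, one argues that two root curves are forced to meet at some $t^*\in(0,1)$, so $\Delta(t^*)=0$.

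The main obstacle lies in upgrading this collision to a genuine sign change of $\Delta$, thereby producing a nearby $t$ at which $F_t$ has a non-real root. The idea is that at a nondegenerate double real root of $F_{t^*}$, the two meeting root curves must leave the real axis as a complex conjugate pair on one side of $t^*$, forcing $\Delta$ to change sign. A small generic perturbation of $f_p$ or $f_q$ that preserves the strict inequality $\beta_j^{(p)}>\beta_{j+1}^{(q)}$ reduces to this nondegenerate case. Alternatively, one may sidestep this analysis entirely by invoking the Hermite--Kakeya--Obreshkov theorem for pairs of real-rooted polynomials of the same degree, which converts the failure of interlacing directly into the failure of real-rootedness of some positive combination.
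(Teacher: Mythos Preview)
The paper does not prove this lemma; it is quoted as a known result with citations to Dedieu, Fell, and Chudnovsky--Seymour, and no argument is given. So there is nothing in the paper to compare your proof against.

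As for the correctness of your outline: the forward direction is fine and is the standard sign-alternation argument. For the reverse direction, your reduction to two polynomials $f_p,f_q$ is correct, since the failure of $\max_i\beta_j^{(i)}\le\min_i\beta_{j+1}^{(i)}$ indeed yields some $p,q,j$ with $\beta_j^{(p)}>\beta_{j+1}^{(q)}$, which forces $\max(\beta_j^{(p)},\beta_j^{(q)})>\min(\beta_{j+1}^{(p)},\beta_{j+1}^{(q)})$ and hence $f_p,f_q$ have no common interlacer. Your discriminant/collision argument, however, is not complete as written: showing that two root curves meet (so $\Delta(t^*)=0$) is not the same as exhibiting a $t$ with $\Delta(t)<0$, and the nondegeneracy reduction you sketch needs a careful justification that the perturbation can be made while keeping $f_p,f_q$ real-rooted and the strict inequality intact. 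Your fallback to the Hermite--Kakeya--Obreshkov theorem is the clean way to finish: for two degree-$n$ real-rooted polynomials with positive leading coefficients, all nonnegative combinations are real-rooted if and only if they admit a common interlacer, so the failure you arranged immediately produces a bad combination. If you intend this as a self-contained proof, either commit to HKO (and cite or prove it) or tighten the discriminant step; as it stands the argument leans on the alternative anyway.
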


\begin{lem} \cite{Marcus} \label{interlacing}
Let $S_1,\ldots,S_m$ be finite sets, and let $\{f_{s_1,\ldots, s_m}\}$ be an interlacing family.
Then there exists some $(s_1,\ldots, s_m) \in S_1 \times \cdots \times S_m$
such that the largest root of $f_{s_1,\ldots, s_m}$ is at most the largest root of $f_\emptyset$.
\end{lem}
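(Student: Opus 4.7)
The plan is to prove Lemma \ref{interlacing} by induction on $m$, with the crux being a one-level claim: if real-rooted polynomials $f_1,\ldots,f_k$ of the same degree $n$ with positive leading coefficients share a common interlacer, then at least one $f_i$ has largest root at most that of $\sum_j f_j$. Granting this, the induction peels off one coordinate of the index tuple at a time.

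To establish the one-level claim, I would fix a common interlacer $g$ with roots $\alpha_1 \le \cdots \le \alpha_{n-1}$, and let $\beta$ denote the largest root of $f_{\mathrm{sum}} := \sum_j f_j$, which is real-rooted by Lemma \ref{fell}. A sign evaluation at each $\alpha_j$ gives $\sgn f_i(\alpha_j)=(-1)^{n-j}$ for every $i$, so the same alternating pattern holds for $f_{\mathrm{sum}}(\alpha_j)$; combined with the correct asymptotic signs at $\pm\infty$, this forces $g$ to interlace $f_{\mathrm{sum}}$ as well, and in particular $\beta \ge \alpha_{n-1}$. Now suppose for contradiction that every $f_i$ has its largest root $\gamma_i$ strictly exceeding $\beta$, and pick any $x$ with $\beta < x < \min_i \gamma_i$. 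For each $i$, every factor $(x - \beta^{(i)}_\ell)$ with $\ell<n$ is strictly positive (since $\beta^{(i)}_\ell \le \alpha_{n-1} \le \beta < x$) while $(x-\gamma_i)$ is strictly negative, so $f_i(x)<0$. Summing over $i$ yields $f_{\mathrm{sum}}(x)<0$, contradicting the fact that $f_{\mathrm{sum}}$ has positive leading coefficient and $x$ lies past its largest root.

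For the induction on $m$, the case $m=0$ is vacuous. At the inductive step, the defining property of an interlacing family says that the polynomials $\{f_t\}_{t\in S_1}$, with $f_t$ defined as the sum over the remaining coordinates, share a common interlacer. The one-level claim then yields $s_1^\ast \in S_1$ with largest root of $f_{s_1^\ast}$ at most that of $f_\emptyset$. The subfamily $\{f_{s_1^\ast,s_2,\ldots,s_m}\}$ indexed by $S_2 \times \cdots \times S_m$ is itself an interlacing family whose own ``empty sum'' is $f_{s_1^\ast}$, since the common-interlacing requirement at each partial assignment $(s_1^\ast,s_2,\ldots,s_k)$ is inherited directly from the original family. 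Induction then supplies $(s_2^\ast,\ldots,s_m^\ast)$ with largest root of $f_{s_1^\ast,s_2^\ast,\ldots,s_m^\ast}$ at most that of $f_{s_1^\ast}$, and hence at most that of $f_\emptyset$.

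The main obstacle is the one-level claim, and within it the step establishing $\beta \ge \alpha_{n-1}$ via the sign-pattern argument showing that $g$ interlaces the sum $f_{\mathrm{sum}}$; once that is in hand, the contradiction via picking $x$ just above $\beta$ is immediate. Real-rootedness of $f_{\mathrm{sum}}$, which is what makes ``the largest root $\beta$'' meaningful in the first place, is precisely what Lemma \ref{fell} delivers, so the equivalence in that lemma plays an essential supporting role throughout.
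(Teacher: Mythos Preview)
The paper does not supply its own proof of this lemma; it is quoted from Marcus--Spielman--Srivastava and used as a black box. Your argument is essentially the standard one from that source: reduce to the one-level statement that among finitely many same-degree real-rooted polynomials with positive leading coefficients sharing a common interlacer, some member has largest root bounded by that of the sum, and then peel off coordinates inductively.

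One small imprecision: you assert $\sgn f_i(\alpha_j)=(-1)^{n-j}$, but the paper's interlacing definition uses weak inequalities, so $f_i(\alpha_j)$ may vanish. This does not damage the proof, since all you actually need is $f_i(\alpha_{n-1})\le 0$ for each $i$ (which does follow from weak interlacing), whence $f_{\mathrm{sum}}(\alpha_{n-1})\le 0$ and thus $\beta\ge \alpha_{n-1}$. With that in hand, your choice of $x$ with $\beta<x<\min_i\gamma_i$ yields strict inequalities $x-\beta^{(i)}_\ell>0$ for $\ell<n$ and $x-\gamma_i<0$, so $f_i(x)<0$ and the contradiction goes through. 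The inductive wrap-up is fine: fixing $s_1^\ast$ and restricting to $S_2\times\cdots\times S_m$ does give an interlacing family whose empty sum is $f_{s_1^\ast}$.
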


\begin{lem} \cite{Marcus, Greaves} \label{interlacing1}
If $a_1,\ldots, a_m, b_1,\ldots, b_m$ are vectors in $\mathbb{C}^n$, $D$ is a
Hermitian positive semidefinite matrix, and $p_1,\ldots, p_m$ are real numbers in $[0, 1]$.
Then the polynomial
$$\sum_{S \subseteq [m]}\left(\prod_{j\in S}p_j\right)\left(\prod_{j\notin S}(1-p_j)\right)
\det\left(xI+D+\sum\limits_{j\in S}a_ja_j^*+\sum\limits_{j\notin S}b_jb_j^*\right)$$
has only real roots.
\end{lem}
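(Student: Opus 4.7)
The plan is to realize the polynomial $Q(x)$ appearing in the statement as the result of applying a sequence of stability-preserving operations to a multivariate real-stable polynomial. Introduce
$$F(x, y_1, z_1, \ldots, y_m, z_m) := \det\Bigl(xI + D + \sum_{j=1}^{m}(y_j a_j a_j^* + z_j b_j b_j^*)\Bigr).$$
Since $D$, $a_j a_j^*$, $b_j b_j^*$ are all Hermitian positive semidefinite, the Borcea--Br\"and\'en theorem guarantees that $F$ is real stable in all $2m+1$ variables. Because each $a_j a_j^*$ and $b_j b_j^*$ is rank one, $F$ is moreover affine in each $y_j$ and each $z_j$ separately.

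Next, let $\epsilon_1, \ldots, \epsilon_m$ be independent Bernoulli random variables with $\mathbb{E}[\epsilon_j] = p_j$. Then the polynomial in the statement equals $\mathbb{E}_\epsilon F(x, \epsilon_1, 1-\epsilon_1, \ldots, \epsilon_m, 1-\epsilon_m)$. By the affinity of $F$ in each pair $(y_j, z_j)$, the single-coordinate expectation collapses to the convex combination
$$T_j F := p_j F|_{y_j = 1,\, z_j = 0} + (1-p_j) F|_{y_j = 0,\, z_j = 1}.$$
The operators $T_1, \ldots, T_m$ act on disjoint variable pairs, so they commute, and the polynomial in the statement equals $T_1 T_2 \cdots T_m F$, now a polynomial in the single variable $x$.

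The main task is to show that each $T_j$ preserves real stability. I would use the line-restriction characterization of real stability: $T_j F$ is real stable iff for every real base point $u$ and every componentwise positive direction $v$ in the remaining variables, the univariate polynomial $T_j F(u + tv)$ in $t$ is real-rooted. Along such a line the matrix $B := v_x I + \sum_{k \neq j}(v_{y_k} a_k a_k^* + v_{z_k} b_k b_k^*)$ is positive definite, and conjugation by $B^{-1/2}$ lets one write each of $F|_{y_j = 1,\, z_j = 0}(u + tv)$ and $F|_{y_j = 0,\, z_j = 1}(u + tv)$, up to the positive scalar $\det B$, as the characteristic polynomial of a rank-one positive semidefinite perturbation of a single common Hermitian matrix $H$. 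Cauchy's interlacing theorem then produces a common interlacer of degree $n - 1$ for the two restricted polynomials, and Lemma~\ref{fell} guarantees that the convex combination $T_j F$ is real-rooted along the line; hence $T_j F$ is real stable in the remaining variables.

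Iterating $T_1, \ldots, T_m$ yields the polynomial in the statement as $T_1 \cdots T_m F$, a real-stable polynomial in $x$, which is equivalent to being real-rooted. The main obstacle is carrying through the conjugation argument uniformly along every real line with positive direction: one must verify that the two relevant matrices at each step are indeed rank-one PSD perturbations of the same base Hermitian matrix, which relies on the positive definiteness of the scaling matrix $B$ --- this in turn uses the componentwise positivity of the direction vector $v$ together with the positive semidefiniteness of $D$, $a_k a_k^*$ and $b_k b_k^*$.
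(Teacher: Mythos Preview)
The paper does not give its own proof of Lemma~\ref{interlacing1}; it is quoted from \cite{Marcus, Greaves} without argument. So there is nothing in the paper to compare against directly, and the relevant comparison is with the proofs in those cited sources.

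Your approach is correct and is essentially the Marcus--Spielman--Srivastava argument, recast in a way suited to the two-vector setting of \cite{Greaves}. The ingredients are exactly the standard ones: the determinantal polynomial $F$ is real stable by the Borcea--Br\"and\'en determinant criterion; multi-affinity in the $y_j,z_j$ reduces the Bernoulli expectation to a composition of convex-combination-plus-specialization operators $T_j$; and each $T_j$ preserves real stability because, after restricting to a line with strictly positive direction and conjugating by $B^{-1/2}$, the two specializations become characteristic polynomials of rank-one PSD perturbations $H+c_jc_j^*$ and $H+d_jd_j^*$ of the same Hermitian $H$, hence share the degree-$(n-1)$ interlacer with roots $\mu_2,\ldots,\mu_n$ (the top $n-1$ eigenvalues of $H$). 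Invoking Lemma~\ref{fell} then closes the loop. The only step you leave implicit is the precise form of that common interlacer, but this is routine from the rank-one eigenvalue interlacing inequalities.

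In \cite{Marcus} the same conclusion is reached via a slightly different packaging: they pass through mixed characteristic polynomials and differential operators of the form $1-\partial_{z_i}$ acting on a master real-stable polynomial, rather than your explicit $T_j$ operators. Your formulation is arguably more transparent for this particular statement, since it keeps the probabilistic interpretation visible and avoids introducing the differential-operator machinery; the trade-off is that the MSS formulation generalizes more readily to higher-rank updates. Either route is fine here.
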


\subsection{Partial orientation}

Let $G=(V,E)$ be a connected graph and let $T$ be a spanning tree of $G$.
Suppose that $E(G)\backslash E(T)$ has $m$ edges, say $e_i=\{u_i,v_i\}$ for $i \in [m]$.
We specify a vertex say $u_i$ for each edge $e_i$ for $i \in [m]$.
Let $S_i =\{-1,1\}$ for $i \in [m]$.
Then the partial orientations $\sigma \in \Ori_T(G)$ are in bijective correspondence with the $m$-tuples
$(s_{1}, \ldots, s_{m}) \in S_{1} \times \cdots \times S_{m}$ by the rule $s_i=\sigma(u_i,v_i)$ for $i \in [m]$.
Under this correspondence, we define
$$f_{s_{1}, \ldots, s_{m}}(x) :=\det(xI-H(G_T^{\sigma})).$$

For a vertex $v \in V$, denote by $\ep_v \in \R^{V}$ a vector with entries indexed the vertices of $V$ such that
it has only one nonzero entry $1$ on the position indexed by $v$.

\begin{thm} \label{main1}
The polynomials $\{f_{s_{1}, \ldots, s_{m}}(x)\}$ form an interlacing family.
\end{thm}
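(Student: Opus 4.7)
The plan is to combine Lemma \ref{fell} with Lemma \ref{interlacing1}. By Lemma \ref{fell}, it suffices to show that for every $0 \le k \le m-1$, every partial assignment $(s_1,\ldots,s_k)\in S_1\times\cdots\times S_k$, and every pair $\lambda_{+1},\lambda_{-1} \ge 0$, the polynomial $\lambda_{+1} f_{s_1,\ldots,s_k,+1}(x) + \lambda_{-1} f_{s_1,\ldots,s_k,-1}(x)$ is real-rooted.

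The key algebraic step is to rewrite $f_{s_1,\ldots,s_m}(x)$ as a sum of PSD rank-one perturbations of a single Hermitian background, one rank-one term per edge outside $T$. For each $e_i = \{u_i, v_i\} \in E(G)\setminus E(T)$, introduce vectors
$$d_i^{+1} := \ep_{u_i} + \i\,\ep_{v_i}, \qquad d_i^{-1} := \ep_{u_i} - \i\,\ep_{v_i},$$
and set $E_i := \ep_{u_i}\ep_{u_i}^{\top} + \ep_{v_i}\ep_{v_i}^{\top}$ and $M_i := \i(\ep_{u_i}\ep_{v_i}^{\top} - \ep_{v_i}\ep_{u_i}^{\top})$, so that $s_i M_i$ is the contribution of the arc on $e_i$ to $H(G_T^\sigma)$. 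A short direct computation yields the rank-one identity $d_i^{s_i}(d_i^{s_i})^{*} = E_i - s_i M_i$ for each $s_i \in \{+1,-1\}$. Combined with $H(G_T^\sigma) = H(T) + \sum_{i=1}^{m} s_i M_i$, this gives
$$f_{s_1,\ldots,s_m}(x) = \det\Bigl(xI + D + \sum_{i=1}^{m} d_i^{s_i}(d_i^{s_i})^{*}\Bigr),\qquad D := -H(T) - \sum_{i=1}^{m} E_i.$$

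Now fix $(s_1,\ldots,s_k)$ and $\lambda_{\pm 1} \ge 0$, and put $p_{k+1} := \lambda_{+1}/(\lambda_{+1}+\lambda_{-1})$ and $p_j := 1/2$ for $k+1 < j \le m$. A routine unfolding of the nested sums defining $f_{s_1,\ldots,s_k,\pm 1}$ shows that $\lambda_{+1} f_{s_1,\ldots,s_k,+1}(x) + \lambda_{-1} f_{s_1,\ldots,s_k,-1}(x)$ is a positive scalar multiple of
$$\sum_{S\subseteq\{k+1,\ldots,m\}}\Bigl(\prod_{j\in S}p_j\Bigr)\Bigl(\prod_{j\notin S}(1-p_j)\Bigr)\det\Bigl(xI + D' + \sum_{j\in S} d_j^{+1}(d_j^{+1})^{*} + \sum_{j\notin S} d_j^{-1}(d_j^{-1})^{*}\Bigr),$$
where $D' := D + \sum_{i=1}^{k} d_i^{s_i}(d_i^{s_i})^{*}$ collects the already-fixed rank-one updates together with $D$. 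This is precisely the polynomial in Lemma \ref{interlacing1}, so it has only real roots.

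The only snag is that $D'$ is Hermitian but generally not positive semidefinite, whereas Lemma \ref{interlacing1} requires a PSD $D$. This is easily bypassed by replacing $D'$ with $D' + cI$ for $c$ large enough to make it PSD and then undoing the shift $x \mapsto x - c$, which preserves real-rootedness. The substantive step of the proof is therefore the rank-one factorization $E_i - s_i M_i = d_i^{s_i}(d_i^{s_i})^{*}$, which converts the non-PSD Hermitian contribution of each oriented edge into a PSD rank-one update of a single fixed background; once that is in hand, the interlacing-family machinery of Marcus--Spielman--Srivastava runs without further work.
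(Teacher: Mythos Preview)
Your proof is correct and follows essentially the same approach as the paper: reduce to real-rootedness via Lemma~\ref{fell}, write each characteristic polynomial as a determinant of a fixed Hermitian background plus rank-one updates $d_i^{s_i}(d_i^{s_i})^{*}$ for the non-tree edges, and then invoke Lemma~\ref{interlacing1} after a shift to make the background positive semidefinite. The only cosmetic differences are that the paper also decomposes the tree edges into rank-one terms $J_T=\sum_{e\in E(T)}(\ep_u-\ep_v)(\ep_u-\ep_v)^{\top}$ (so that the explicit shift $x\mapsto x-\Delta$ suffices, with $\Delta I-D+J_T\succeq 0$) and encodes the fixed coordinates $s_1,\ldots,s_k$ by setting $p_j=(1+s_j)/2\in\{0,1\}$ rather than absorbing them into the background $D'$.
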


\begin{proof}
By Lemma \ref{fell}, we only need to prove that for every $k=0, \ldots, m-1$, for all $(s_1,\ldots,s_k) \in \{-1,1\}^k$,
and for every $\lambda \in [0,1]$, the polynomial
$$\lambda f_{s_{1}, \ldots, s_{k}, 1}+(1-\lambda) f_{s_{1}, \ldots, s_{k},-1}=:P$$
is real-rooted.
Note that $P$ can be written as
$$P=\lambda \sum_{s_{k+2},\ldots, s_m\in \{\pm1\}}f_{s_1,\ldots,s_{k}, 1,s_{k+2},\ldots, s_m}+
(1-\lambda)\sum_{s_{k+2},\ldots, s_m\in \{\pm1\}}f_{s_1,\ldots,s_{k}, -1,s_{k+2},\ldots, s_m}.$$
We claim that we can rewrite $P$ as the form of Lemma \ref{interlacing1}
   by taking the following values for the constants $p_j$ and vectors $a_j, b_j$ ($j \in [m]$).

Set $$p_j= \left  \{
\begin{array}{ll}
(1+s_{j}) / 2 , & \hbox{~if~} 1 \leq j \leq k,\\
\lambda, & \hbox{~if~} j=k+1,\\
1/2, &  \hbox{~if~} k+2 \leq j \leq m.
\end{array}
\right.
$$

For each edge $e=\{u,v\}\in E(T)$, define a matrix $J_e:=(\ep_u-\ep_v)(\ep_u-\ep_v)^\top$.
Let $J_T:=\sum_{e \in E(T)} J_e$, which is positive semidefinite.
Define
$a_{j}=\ep_{u_{j}}+\i \ep_{v_{j}}$ and $b_{j}=\ep_{u_{j}}-\i \ep_{v_{j}}$ for $j \in [m]$.

For each partial orientation $\sigma \in \Ori_T(G)$ bijectively corresponding to $(s_1,\ldots,s_m) \in \{-1,1\}^m$,
if $S \subseteq [m]$ is the set of indices $j$ for which $s_{j}=1$, then
$$J_T+ \sum_{j \in S} a_{j} a_{j}^{*}+\sum_{j \notin S} b_{j} b_{j}^{*}=D-H(G_T^{\sigma}),$$
where $D$ is the diagonal matrix consisting of the degrees of vertices of $G$.
So we have
\begin{align*}
P&=\lambda \sum_{s_{k+2},\ldots, s_m\in \{\pm1\}}f_{s_1,\ldots,s_{k}, 1,s_{k+2},\ldots, s_m}+
(1-\lambda)\sum_{s_{k+2},\ldots, s_m\in \{\pm1\}}f_{s_1,\ldots,s_{k}, -1,s_{k+2},\ldots, s_m}\\
&=2^{m-k-1}\sum_{S \subseteq [m]}\left(\prod_{j\in S}p_j \right) \left(\prod_{j\notin S}(1-p_j) \right)
 \det \left(xI-D+J_T+\sum_{j\in S}a_ja_j^*+\sum_{j\notin S}b_jb_j^*\right).
\end{align*}
Let $\Delta$ be the maximum degree in $G$ and $y=x-\Delta.$ Then $P(x)=2^{m-k-1}Q(y)$, where
$$Q(y)=\sum_{S \subseteq [m]}\left(\prod_{j\in S} p_j \right) \left(\prod_{j\notin S}(1-p_j)\right)
  \det\left(yI+(\Delta I-D+J_T)+\sum_{j\in S}a_ja_j^*+\sum_{j\notin S}b_jb_j^*\right).$$
Since $\Delta I-D+J_T$ is positive semidefinite, Lemma \ref{interlacing1} indicates that $Q(y)$ has only real roots.
Hence $P(x)$ has only real roots and the result follows.
\end{proof}

Lemma \ref{interlacing} and Theorem \ref{main1} indicate that there exists $(s_1,\ldots, s_m) \in S_1 \times \cdots \times S_m$
(corresponding to a partial orientation $\sigma \in \Ori_T(G)$) such that the largest root of $f_{s_1,\ldots, s_m}(=\det(xI-H(G_T^{\sigma})))$
  is no more than the largest root of $f_\emptyset$, which is equal to $2^m \mathbb{E}_{\theta \in \Ori_T(G)} \det(xI-H(G_T^\sigma))=2^m\mu_{G}(x)$ by Theorem \ref{random}.
So we arrive at the main result of this paper.

\begin{thm}
Let $G$ be a connected graph and let $T$ be a spanning tree of $G$.
Let $\mu_G(x)$ be the matching polynomial of $G$.
Then there exists a partial orientation $\sigma$ of $G$ respect to $T$ such that
$\lamax(G^\sigma_T) \le \rho(\mu_G)$.
\end{thm}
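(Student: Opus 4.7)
The plan is to synthesize the three main technical ingredients already established in the paper: Theorem \ref{random} (expected characteristic polynomial equals the matching polynomial), Theorem \ref{main1} (interlacing family structure), and Lemma \ref{interlacing} (existence of a member whose largest root is bounded by the largest root of $f_\emptyset$). The target inequality is exactly the kind of conclusion the interlacing-families framework is designed to deliver, so essentially all the heavy lifting has already been done; what remains is identifying $f_\emptyset$ correctly and comparing roots.

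First I would enumerate $E(G)\setminus E(T) = \{e_1,\ldots,e_m\}$ and set $S_i = \{-1,1\}$ exactly as in the setup preceding Theorem \ref{main1}, so that partial orientations $\sigma \in \Ori_T(G)$ are in bijection with tuples $(s_1,\ldots,s_m) \in S_1 \times \cdots \times S_m$ and $f_{s_1,\ldots,s_m}(x) = \det(xI - H(G_T^\sigma))$. By Theorem \ref{main1} these polynomials form an interlacing family, so Lemma \ref{interlacing} yields a tuple $(s_1,\ldots,s_m)$, corresponding to some partial orientation $\sigma$, such that the largest root of $f_{s_1,\ldots,s_m}$ is at most the largest root of $f_\emptyset$.

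Next I would identify $f_\emptyset$. By definition $f_\emptyset = \sum_{\sigma \in \Ori_T(G)} \det(xI - H(G_T^\sigma))$, and since $|\Ori_T(G)| = 2^m$, this equals $2^m \, \mathbb{E}_{\sigma \in \Ori_T(G)} \det(xI - H(G_T^\sigma))$, which by Theorem \ref{random} equals $2^m \mu_G(x)$. The roots of $2^m \mu_G(x)$ coincide with those of $\mu_G(x)$, which are real by the Heilmann--Lieb theorem, and the largest one is $\rho(\mu_G)$ by definition. Combining these, the largest root of $f_{s_1,\ldots,s_m}$ — which is precisely $\lambda_{\max}(G_T^\sigma)$ — is at most $\rho(\mu_G)$, completing the argument.

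Because every substantive step is already packaged as a lemma or theorem earlier in the paper, there is no genuine obstacle left; the only thing to be careful about is the bookkeeping around the normalizing factor $2^m$ and the fact that an interlacing-family conclusion compares \emph{largest roots}, not spectral radii, which is exactly why the theorem statement bounds $\lambda_{\max}$ rather than $\rho$. I would make this distinction explicit in the write-up so that the reader does not conflate it with Corollary \ref{rho}, and note that in the bipartite case the symmetry of the spectrum upgrades the bound on $\lambda_{\max}$ to a bound on $\rho(G_T^\sigma)$, as remarked in the introduction.
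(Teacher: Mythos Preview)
Your proposal is correct and matches the paper's own argument essentially line for line: the paper likewise combines Theorem \ref{main1} with Lemma \ref{interlacing} to obtain a tuple whose largest root is bounded by that of $f_\emptyset$, and then identifies $f_\emptyset = 2^m \mu_G(x)$ via Theorem \ref{random}. Your remarks on the $2^m$ normalization and on the distinction between $\lambda_{\max}$ and $\rho$ are exactly the bookkeeping points the paper handles in the same way.
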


If $G$ is a bipartite graph, then $H(G_T^\sigma)$ has a symmetric spectrum about the origin for any partial orientation $\sigma$ \cite{LL}.
So we have the following corollary.

\begin{cor}\label{bip}
Let $G$ be a connected bipartite graph and let $T$ be a spanning tree of $G$.
Let $\mu_G(x)$ be the matching polynomial of $G$.
Then there exists a partial orientation $\sigma$ of $G$ respect to $T$ such that
$\rho(G^\sigma_T) \le \rho(\mu_G)$.
\end{cor}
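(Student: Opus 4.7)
The plan is to assemble the three technical ingredients that have just been set up in the preceding subsections, without any further combinatorial work. First, I invoke Theorem \ref{main1}, which asserts that the family $\{f_{s_1,\ldots,s_m}\}_{(s_1,\ldots,s_m)\in\{-1,1\}^m}$ is an interlacing family, where each $f_{s_1,\ldots,s_m}(x)=\det(xI-H(G_T^\sigma))$ corresponds under the bijection $s_j=\sigma(u_j,v_j)$ to a partial orientation $\sigma\in\Ori_T(G)$. Applying Lemma \ref{interlacing} to this family then produces an index tuple $(s_1,\ldots,s_m)$, and hence a partial orientation $\sigma\in\Ori_T(G)$, for which the largest root of $f_{s_1,\ldots,s_m}$ does not exceed the largest root of $f_\emptyset$.

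Next, I would identify $f_\emptyset$ explicitly. By definition $f_\emptyset$ is the sum of $f_{s_1,\ldots,s_m}$ over all $(s_1,\ldots,s_m)\in\{-1,1\}^m$, and since $|\Ori_T(G)|=2^m$ this sum equals
$$f_\emptyset(x)=\sum_{\sigma\in\Ori_T(G)}\det(xI-H(G_T^\sigma))=2^m\,\mathbb{E}_{\sigma\in\Ori_T(G)}\det(xI-H(G_T^\sigma))=2^m\mu_G(x),$$
where the last equality is Theorem \ref{random}. Multiplication by the positive constant $2^m$ does not affect the roots, so the largest root of $f_\emptyset$ coincides with the largest root of $\mu_G$, which (by the Heilmann--Lieb real-rootedness noted in the paper) is precisely $\rho(\mu_G)$.

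Finally, the largest root of $f_{s_1,\ldots,s_m}(x)=\det(xI-H(G_T^\sigma))$ is by definition the largest eigenvalue $\lamax(G_T^\sigma)$ of the Hermitian matrix $H(G_T^\sigma)$. Chaining the two inequalities gives $\lamax(G_T^\sigma)\le \rho(\mu_G)$ for the orientation $\sigma$ produced by Lemma \ref{interlacing}, which is exactly the desired conclusion. The genuinely substantive steps -- verifying the interlacing property (Theorem \ref{main1}, resting on Lemma \ref{interlacing1}) and computing the expected characteristic polynomial (Theorem \ref{random}) -- have already been dispatched, so at this final stage there is no remaining obstacle; the argument is a three-line synthesis of earlier results.
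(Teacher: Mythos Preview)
Your argument establishes $\lamax(G_T^\sigma)\le\rho(\mu_G)$, but that is the statement of the main theorem immediately preceding this corollary, not the corollary itself. Corollary~\ref{bip} asks for the stronger conclusion $\rho(G_T^\sigma)\le\rho(\mu_G)$, where $\rho(G_T^\sigma)=\max\{\lamax(G_T^\sigma),\,|\lamin(G_T^\sigma)|\}$ is the spectral radius. Bounding only the largest eigenvalue says nothing about $|\lamin(G_T^\sigma)|$, and indeed Example~\ref{ex1} in the paper exhibits a non-bipartite $G$ and a partial orientation with $\lamax<\rho(\mu_G)$ but $\rho>\rho(\mu_G)$.

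The missing ingredient is precisely the bipartiteness hypothesis, which you never invoke. The paper's one-line deduction is that for bipartite $G$ the spectrum of $H(G_T^\sigma)$ is symmetric about the origin (a fact cited from \cite{LL}), so $|\lamin(G_T^\sigma)|=\lamax(G_T^\sigma)$ and hence $\rho(G_T^\sigma)=\lamax(G_T^\sigma)$. Once that is in place, the bound on $\lamax$ you derived (or simply the main theorem) immediately gives the bound on $\rho$. Your synthesis of Theorems~\ref{random} and~\ref{main1} with Lemma~\ref{interlacing} is fine as a re-derivation of the main theorem, but to finish the corollary you must add this symmetry step.
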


Mohar \cite{mohar2} introduced an operation on mixed graphs, called \emph{four-way switching}.
In matrix language, a four-way switching $s$ of a mixed graph $D$ is corresponding to a diagonal matrix $S$ with entries $S_{vv} \in \{\pm 1, \pm \i\}$
such that $S^{-1} H(D) S$ is a Hermitian adjacency matrix of a mixed graph denoted by $D^S$, where $D^S$ is obtained from $D$ by the four-switching $s$.
Call the matrix $S$ with the above property a \emph{switching matrix}.
The \emph{converse} of a mixed graph $D$, denoted by $D^\top$, is obtained from $D$ by reverse the orientation of each arc of $D$ \cite{mohar2}.
Note that $H(D^\top)=H(D)^\top=\overline{H(D)}$.

Two mixed graphs are called \emph{switching equivalent} if one can be obtained from another by a sequence of four-way switchings and/or taking converse \cite{mohar2}.
For a mixed graph $D$, applying to $D$ firstly by taking converse and then taking four-way switching by a switching matrix $S$, is equivalent to
applying to $D$ firstly by taking four-way switching by the switching matrix $\overline{S}(=S^{-1})$ and then taking converse,
 as $\overline{S}\overline{H(D)}S=\overline{SH(D)\overline{S}}$.
 Also, applying to $D$ by two four-way switchings is equivalent to one four-way switching
 as $\overline{S_2} (\overline{S_1} H(D) S_1) S_2= \overline{(S_1S_2)}H(D) (S_1S_2)$.
 So, a mixed graph $D_1$ is switching equivalent to $D_2$ if $D_2$ can be obtained from $D_1$ by firstly taking a four-way switchings and then at most one converse,
 namely $H(D_2)=S^{-1} H(D_1)S$ or $\overline{H(D_2)}=S^{-1} H(D_1)S$ for some switching matrix $S$.

\begin{lem}\label{four}
Let $G$ be a connected graph and let $T$ be a spanning tree of $G$,
and let $\sigma$ be a partial orientation of $G$ with respect to $T$

\begin{enumerate}
\item $G^\sigma_T$ is switching equivalent to $G$ if and only if $G=T$.

\item $G^\sigma_T$ is switching equivalent to an oriented graph of $G$ if and only if
for any even cycle $C$ of $G$, $T$ contains at most $|C|-2$ edges of $C$, where $|C|$ denotes the number of edges of $C$.
\end{enumerate}

\end{lem}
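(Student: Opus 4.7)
The plan is to use the \emph{phase} of a closed walk as the key switching invariant. For a mixed graph $D$ and closed walk $W=v_0v_1\cdots v_k=v_0$, set $\phi_D(W):=\prod_{i=0}^{k-1}H(D)_{v_iv_{i+1}}$; a telescoping computation (the factors $\overline{s_{v_i}}s_{v_i}=|s_{v_i}|^2=1$ cancel around the loop) shows that $\phi$ is preserved under four-way switching, and the converse operation conjugates $\phi$. Hence two switching-equivalent mixed graphs have the same $\phi$ on every closed walk, up to one overall complex conjugation.

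For part (1), $G=T$ gives $G_T^\sigma=G$ (no edges to orient). Conversely, if $G\neq T$ fix a non-tree edge $e$; the fundamental cycle $C_e\subseteq T+e$ has $|C_e|-1$ tree edges and one oriented edge, so $\phi_{G_T^\sigma}(C_e)=\pm\i$, while $\phi_G(C_e)=1$. Since neither $\pm\i$ nor its conjugate $\mp\i$ equals $1$, $G_T^\sigma$ cannot be switching equivalent to $G$.

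For part (2), put $p(C):=|E(T)\cap E(C)|$. Then $\phi_{G_T^\sigma}(C)=\pm\i^{|C|-p(C)}$ and the phase of $C$ in any orientation $G^\tau$ is $\pm\i^{|C|}$; matching these up to conjugation forces $\i^{-p(C)}\in\{\pm1\}$, so $p(C)$ is even. For even $|C|$ this is exactly $p(C)\le|C|-2$ (since $|C|-1$ is then odd), giving the necessary direction. For sufficiency, observe that $C\mapsto p(C)\bmod 2$ is $\Z/2$-linear on the cycle space (because $|A\oplus B|\equiv|A|+|B|\pmod 2$). The hypothesis forbids any even cycle with $p=|C|-1$; since $p(C_e)=|C_e|-1$ for each fundamental cycle, $C_e$ cannot be even. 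Hence every $C_e$ is odd, $p(C_e)$ is even, and by linearity $p$ is even on every cycle. Now fix a root of $T$ and let $b(v)\in\{0,1\}$ be the parity of the distance from the root in $T$; tree edges join vertices of different parities, and for each non-tree edge $\{u,v\}$ the odd length of $C_e$ forces $b(u)=b(v)$. Setting $S=\diag(\i^{b(v)})$, each entry $(S^{-1}H(G_T^\sigma)S)_{uv}=\i^{b(v)-b(u)}\,H(G_T^\sigma)_{uv}$ equals $\i^{\pm 1}=\pm\i$ for tree edges and $\pm\i\cdot\i^{0}=\pm\i$ for oriented edges, so $S^{-1}H(G_T^\sigma)S$ is the Hermitian adjacency matrix of an oriented graph of $G$.

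The main obstacle is the $\Z/2$-linearity step, which upgrades a hypothesis stated only for even cycles into the much stronger statement that every fundamental cycle is odd; this global parity constraint is precisely what is needed to 2-color $V$ consistently, and hence to produce a single switching matrix that trivializes every tree edge while preserving the orientation of every non-tree edge.
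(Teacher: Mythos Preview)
Your proof is correct and takes a genuinely different route from the paper's.

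The paper argues both parts by working directly with a putative switching matrix $S$. For (1) it fixes $S_{uu}=1$ at a root, propagates $S_{vv}=1$ along tree edges via the equation $S^{-1}H(G_T^\sigma)S=H(G)$, and concludes $S=I$, hence $H(G_T^\sigma)=H(G)$. For (2) it similarly propagates along tree paths to show $S_{vv}\in\{\pm1\}$ on the even-distance set $V_1$ and $S_{vv}\in\{\pm\i\}$ on $V_2$, then checks a single non-tree edge of an offending even cycle for a contradiction; sufficiency is the same bipartition construction you give.

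Your approach replaces this entry-by-entry bookkeeping with the closed-walk phase $\phi_D$, a switching invariant (up to one global conjugation). This makes the necessity in both parts essentially one line: the fundamental cycle $C_e$ has $\phi_{G_T^\sigma}(C_e)=\pm\i\notin\{1,\bar 1\}$ for (1), and in (2) comparing $\pm\i^{|C|-p(C)}$ with $\pm\i^{|C|}$ forces $p(C)$ even. What this buys is conceptual clarity and portability --- the same invariant would handle other target classes of mixed graphs without redoing the propagation argument. The paper's approach, on the other hand, is more elementary in that it never needs to invoke the cycle space or an abstract invariant, and it makes the structure of the switching matrix completely explicit from the start. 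Your cycle-space linearity step (concluding $p$ is even on all cycles) is correct but not actually used afterwards; the operative consequence is just that each fundamental cycle is odd, which you then use exactly as the paper does to place every non-tree edge within a single parity class.
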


\begin{proof}
(1) Clearly the sufficiency holds by taking $I$ as the switching matrix.
Suppose $G^\sigma_T$ is switching equivalent to $G$.
As $\overline{H(G)}=H(G)$, there exists a switching matrix $S$ such that
\begin{equation}\label{swit}
S^{-1} H(G^\sigma_T) S=H(G).
\end{equation}
Fix a vertex $u \in V$, and without loss of generality assume that $S_{uu}=1$.
Let $v$ be a neighbor of $u$ in $T$.
Then, by comparing the $(u,v)$-entries of both sides of Eq. (\ref{swit}),
$S_{uu}^{-1} S_{vv}=1$, implying that $S_{vv}=1$.

Now let $w \ne u$ be an arbitrary vertex of $V$.
As $T$ is a spanning tree of $G$, there exists a path $P: u=u_0 u_1 \ldots u_p=w$ in $T$.
By the above discussion,
$$S_{uu}=S_{u_0 u_0}=S_{u_1 u_1} =\cdots =S_{u_p u_p}=S_{w w}=1.$$
So $S$ is an identity matrix, and $H(G^\sigma_T)=H(G)$.
The necessity follows by the definition.

(2) Fixing a vertex $u \in V$, let $V_1$ (respectively, $V_2$) be the set of vertices of $V$ with even distance (respectively, odd distance) to $u$ in $T$.
Then $V_1, V_2$ consist of a bipartition of $V$, where $u \in V_1$.

Assume that $G^\sigma_T$ is switching equivalent to an oriented graph $\vec{G}$.
As $\overline{H(\vec{G})}=H(\vec{G}^\top)$ and $\vec{G}^\top$ is still an oriented graph.
So we can assume that there exists a switching matrix $S$ such that
\begin{equation}\label{swit2}
S^{-1} H(G^\sigma_T) S=H(\vec{G}).
\end{equation}
Without loss of generality assume that $S_{uu}=1$.
Let $v \in V_2$ be a neighbor of $u$ in $T$.
Then, by comparing the $(u,v)$-entries of both sides of Eq. (\ref{swit2}),
$S_{uu}^{-1} S_{vv} \in \{-\i,\i\}$, implying that $S_{vv}\in \{-\i,\i\}$.
If $w \in V_1$ is a neighbor of $v$ in $T$ other than $u$, by Eq. (\ref{swit2}) we have
$S_{vv}^{-1} S_{ww} \in \{-\i,\i\}$, implying that $S_{ww} \in \{-1,1\}$.

Now let $z$ be an arbitrary vertex in $V_1$.
Then there exists a path $P: u=u_0 u_1 \ldots u_{2q}=z$ in $T$, where $u_{2i} \in V_1$ for $i=0,\ldots, q$, and $u_{2i+1} \in V_2$ for $i=0,\ldots,q-1$.
By the above discussion, we have $u_{2i} \in \{-1,1\}$ for $i=0,\ldots, q$, and $u_{2i+1} \in \{-\i,\i\}$ for $i=0,\ldots,q-1$.
So $S_{zz} \in \{-1,1\}$.
Similarly, $S_{zz} \in \{-\i,\i\}$ for an arbitrary vertex $z$ in $V_2$.

Let $C$ be an even cycle $C$ of $G$ of length $\ell$.
Assume to the contrary, $T$ contains $\ell-1$ edges of $C$.
Let $e=\{u_1,v_1\}$ be the edge of $C$ which is not lying on $T$, and is oriented by $\sigma$ to be an arc say $(u_1,v_1)$.
Then $e$ is an edge between $V_1$ and $V_2$, say $u_1 \in V_1$ and $v_1 \in V_2$.
Considering the $(u_1,v_1)$-entries of both sides of Eq. (\ref{swit2}),
we have
$$S_{u_1 u_1}^{-1} \cdot \i  \cdot S_{v_1 v_1} \in \{-\i,\i\},$$
which yields a contradiction as $ S_{u_1 u_1}\in \{-1,1\} $ and $S_{v_1 v_1} \in \{-\i,\i\}$.
So we prove the necessity.

Next assume that for any even cycle $C$ of $G$, $T$ contains  at most $|C|-2$ edges of $C$.
Then no edges of $E(G) \backslash E(T)$ lie between $V_1$ and $V_2$; otherwise $G$ would contain an even cycle $C$ such that $T$ contains $|C|-1$ edges of $C$.
Define a diagonal matrix $S$ with entries $S_{vv} \in \{-1,1\}$ if $v \in V_1$ and $S_{vv} \in \{-\i,\i\}$ if $v \in V_2$.
Then $S^{-1} H(G^\sigma_T) S$ is a Hermitian adjacency matrix of an oriented graph of $G$, where $S$ is the switching matrix.
\end{proof}

By Lemma \ref{four}, if $G$ contains no even cycles, then $G^\sigma_T$ is switching equivalent to an oriented graph.
So our result in this case is a conclusion of the results of Greaves et al. (Theorem \ref{larg} and Corollary \ref{rho}) as switching equivalence preserves the spectrum.

However, if $G$ contains an even cycle $C$ of length $\ell$ and $T$ is a spanning tree of $G$ containing $\ell -1$ edges of $C$,
then $G^\sigma_T$ cannot switching equivalent to oriented graphs.
So our result in this case is not within the conclusion of Greaves et al..
If $G$ is further bipartite, then by Corollary \ref{bip} we still have a result similar to Corollary \ref{rho}.
But, in general, Corollary \ref{bip} cannot hold for non-bipartite graphs.

\begin{exm}\label{ex1}
Let $G=D_5$, an undirected graph in Fig. \ref{4g}.
The matching polynomial of $G$ is $\mu_G(x)=x^4-5x^2+2$, whose spectral radius $\rho(\mu_G) \approx 2.136$.
If taking the path with edges $\{1,2\}, \{2,3\}, \{3,4\}$ as the spanning tree $T$ of $G$,
then we have two non-switching equivalent graphs $D_1$ and $D_2$ under partial orientations of $G$ with respect to $T$.
By Lemma \ref{four}, neither $D_1$ nor $D_2$  is switching equivalent to an oriented graph of $G$.
If taking the star with edges $\{1,2\}, \{2,3\}, \{2,4\}$ as the spanning tree $T'$ of $G$,
then we have two non-switching equivalent graphs $D_3$ and $D_4$ under partial orientations of $G$ with respect to $T'$.
By Lemma \ref{four}, both $D_3$ and $D_4$  are switching equivalent to oriented graph of $G$, which have symmetric spectra.

The characteristic polynomials of $D_i$ for $i \in [5]$ are respectively
\begin{align*}
\varphi_{D_1}(x)&=x^4-5x^2+2x+2, ~~~ \varphi_{D_2}(x)= x^4-5x^2-2x+2,\\
\varphi_{D_3}(x)&= x^4-5x^2+4, ~~~ \varphi_{D_4}(x) = x^4-5x^2, ~~~\varphi_{D_5}(x) = x^4-5x^2-4x,
\end{align*}
whose largest eigenvalues and least eigenvalues are respectively
\begin{align*}
 \lamax(D_1)& = -\lamin(D_2) \approx 1.814,~~~  \lamin(D_1) = - \lamax(D_2)\approx -2.343, \\
\rho(D_3)& =  2,  ~~~ \rho(D_4)  \approx 2.236,   ~~~ \lamax(D_5)  \approx 2.562, ~~~  \lamin(D_5) \approx -1.562.
\end{align*}
We have $\lamax(D_1) < \rho(\mu_G)$ but $\rho(D_1) \nleq \rho(\mu_G)$,
and $\rho(D_3) <  \rho(\mu_G)$.

\begin{figure}
\centering
   \setlength{\unitlength}{1bp}%
  \begin{picture}(355.49, 97.51)(0,0)
  \put(0,0){\includegraphics{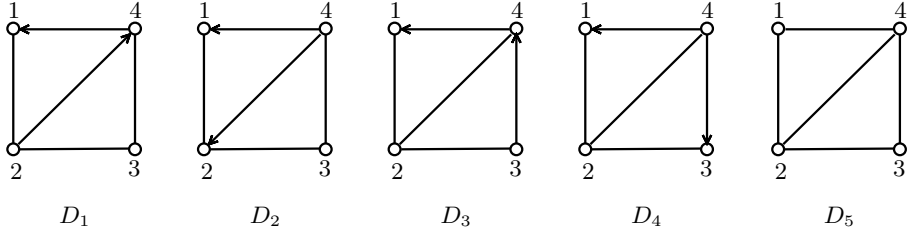}}
  \put(12.47,84.73){\fontsize{9.10}{10.93}\selectfont \makebox[0pt]{$1$}}
  \put(57.86,84.73){\fontsize{9.10}{10.93}\selectfont \makebox[0pt]{$4$}}
  \put(57.31,25.13){\fontsize{9.10}{10.93}\selectfont \makebox[0pt]{$3$}}
  \put(13.29,24.03){\fontsize{9.10}{10.93}\selectfont \makebox[0pt]{$2$}}
  \put(83.56,84.73){\fontsize{9.10}{10.93}\selectfont \makebox[0pt]{$1$}}
  \put(128.94,84.73){\fontsize{9.10}{10.93}\selectfont \makebox[0pt]{$4$}}
  \put(128.39,25.13){\fontsize{9.10}{10.93}\selectfont \makebox[0pt]{$3$}}
  \put(84.38,24.03){\fontsize{9.10}{10.93}\selectfont \makebox[0pt]{$2$}}
  \put(154.64,84.73){\fontsize{9.10}{10.93}\selectfont \makebox[0pt]{$1$}}
  \put(200.02,84.73){\fontsize{9.10}{10.93}\selectfont \makebox[0pt]{$4$}}
  \put(199.47,25.13){\fontsize{9.10}{10.93}\selectfont \makebox[0pt]{$3$}}
  \put(155.46,24.03){\fontsize{9.10}{10.93}\selectfont \makebox[0pt]{$2$}}
  \put(225.72,84.73){\fontsize{9.10}{10.93}\selectfont \makebox[0pt]{$1$}}
  \put(271.10,84.73){\fontsize{9.10}{10.93}\selectfont \makebox[0pt]{$4$}}
  \put(270.56,25.13){\fontsize{9.10}{10.93}\selectfont \makebox[0pt]{$3$}}
  \put(226.54,24.03){\fontsize{9.10}{10.93}\selectfont \makebox[0pt]{$2$}}
  \put(35.16,7.63){\fontsize{9.10}{10.93}\selectfont \makebox[0pt]{$D_1$}}
  \put(106.25,7.63){\fontsize{9.10}{10.93}\selectfont \makebox[0pt]{$D_2$}}
  \put(177.33,7.63){\fontsize{9.10}{10.93}\selectfont \makebox[0pt]{$D_3$}}
  \put(248.41,7.63){\fontsize{9.10}{10.93}\selectfont \makebox[0pt]{$D_4$}}
  \put(297.63,84.73){\fontsize{9.10}{10.93}\selectfont \makebox[0pt]{$1$}}
  \put(343.01,84.73){\fontsize{9.10}{10.93}\selectfont \makebox[0pt]{$4$}}
  \put(342.47,25.13){\fontsize{9.10}{10.93}\selectfont \makebox[0pt]{$3$}}
  \put(298.45,24.03){\fontsize{9.10}{10.93}\selectfont \makebox[0pt]{$2$}}
  \put(320.32,7.63){\fontsize{9.10}{10.93}\selectfont \makebox[0pt]{$D_5$}}
  \end{picture}%
\caption{Five non-switching equivalent non-bipartite mixed graphs}\label{4g}
\end{figure}
\end{exm}

\begin{exm}\label{ex2}
Let $G=H_3$, an undirected graph in Fig. \ref{4h}.
The matching polynomial of $G$ is $\mu_G(x)=x^4-4x^2+2$, whose spectral radius $\rho(\mu_G) \approx 1.848$.
Taking the path with edges $\{1,2\}, \{2,3\}, \{3,4\}$ as the spanning tree $T$ of $G$,
then we have only one graph under switching equivalence, namely $H_1$ of Fig. \ref{4h}, under partial orientation of $G$ with respect to $T$.
By Lemma \ref{four}, $H_1$  is not switching equivalent to an oriented graph of $G$.
Any oriented graph of $G$ is switching equivalent to $H_2$ or $H_3$ of Fig. \ref{4h}.

The characteristic polynomials of $H_i$ for $i \in [3]$ are respectively
$$
\varphi_{H_1}(x)=\mu_G(x)=x^4-4x^2+2, \varphi_{H_2}(x)= x^4-4x^2+4, \varphi_{H_3}(x)= x^4-4x^2.
$$
whose spectral radii hold
$$ \rho(H_2) \approx 1.414 < \rho(H_1)= \rho(\mu_G) \approx 1.848 < \rho(H_3)=2.$$

\begin{figure}
\centering
  \setlength{\unitlength}{1bp}%
  \begin{picture}(212.49, 97.51)(0,0)
  \put(0,0){\includegraphics{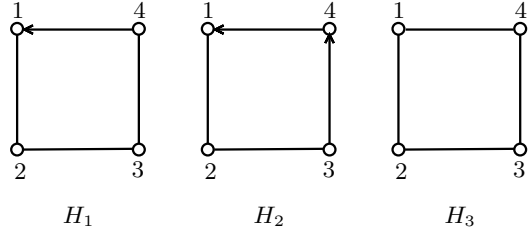}}
  \put(12.47,84.73){\fontsize{9.10}{10.93}\selectfont \makebox[0pt]{$1$}}
  \put(57.86,84.73){\fontsize{9.10}{10.93}\selectfont \makebox[0pt]{$4$}}
  \put(57.31,25.13){\fontsize{9.10}{10.93}\selectfont \makebox[0pt]{$3$}}
  \put(13.29,24.03){\fontsize{9.10}{10.93}\selectfont \makebox[0pt]{$2$}}
  \put(83.56,84.73){\fontsize{9.10}{10.93}\selectfont \makebox[0pt]{$1$}}
  \put(128.94,84.73){\fontsize{9.10}{10.93}\selectfont \makebox[0pt]{$4$}}
  \put(128.39,25.13){\fontsize{9.10}{10.93}\selectfont \makebox[0pt]{$3$}}
  \put(84.38,24.03){\fontsize{9.10}{10.93}\selectfont \makebox[0pt]{$2$}}
  \put(154.64,84.73){\fontsize{9.10}{10.93}\selectfont \makebox[0pt]{$1$}}
  \put(200.02,84.73){\fontsize{9.10}{10.93}\selectfont \makebox[0pt]{$4$}}
  \put(199.47,25.13){\fontsize{9.10}{10.93}\selectfont \makebox[0pt]{$3$}}
  \put(155.46,24.03){\fontsize{9.10}{10.93}\selectfont \makebox[0pt]{$2$}}
  \put(35.16,7.63){\fontsize{9.10}{10.93}\selectfont \makebox[0pt]{$H_1$}}
  \put(106.25,7.63){\fontsize{9.10}{10.93}\selectfont \makebox[0pt]{$H_2$}}
  \put(177.33,7.63){\fontsize{9.10}{10.93}\selectfont \makebox[0pt]{$H_3$}}
  \end{picture}%
\caption{Three non-switching equivalent bipartite mixed graphs}\label{4h}
\end{figure}

\end{exm}

From Example \ref{ex1} and Example \ref{ex2}, we wonder that
among all orientations of a graph, the minimum spectral radius of Hermitian adjacency matrix is attained at an oriented graph.

\end{document}